\documentclass{article}
\usepackage[T1]{fontenc}
\usepackage{amsthm, fullpage}
\usepackage{amsmath}
\usepackage{amssymb}
\usepackage{amsfonts}
\usepackage{eucal}
\usepackage[all]{xy}
\usepackage[frenchb, english]{babel}
\usepackage{pslatex}
\usepackage{hyperref}

\newtheorem{prop}{Proposition}[section]
\newtheorem{theo}[prop]{Théor\`eme}
\newtheorem*{theo**}{Théorème}
\newtheorem{coro}[prop]{Corollaire}
\newtheorem*{conj*}{Conjecture}
\newtheorem{lemm}[prop]{Lemme}
\newtheorem{lemm*}{Lemme}[prop]

\theoremstyle{definition}

\newtheorem{vide}[prop]{}
\newtheorem{defi}[prop]{Définition}
\newtheorem*{defi*}{Définition}

\newtheorem{exem}[prop]{Exemples}

\numberwithin{equation}{prop}

\newcommand{\riso}{ \overset{\sim}{\longrightarrow}\, }
\newcommand{\liso}{ \overset{\sim}{\longleftarrow}\, }

\renewcommand{\sp}{\mathrm{sp}}

\newcommand{\RR}{{\mathcal{R}}}
\newcommand{\FF}{{\mathcal{F}}}

\newcommand{\E}{{\mathcal{E}}}
\newcommand{\G}{{\mathcal{G}}}

\newcommand{\D}{{\mathcal{D}}}

\renewcommand{\O}{{\mathcal{O}}}

\newcommand{\V}{\mathcal{V}}

\newcommand{\Y}{\mathcal{Y}}
\newcommand{\ZZ}{\mathcal{Z}}
\newcommand{\X}{\mathfrak{X}}

\newcommand{\U}{\mathfrak{U}}

\newcommand{\R}{\mathbb{R}}
\newcommand{\Q}{\mathbb{Q}}
\newcommand{\Z}{\mathbb{Z}}

\newcommand{\hdag}{  \phantom{}{^{\dag} }    }

\begin{document}

\selectlanguage{frenchb}

\title{Stabilité de l'holonomie sans structure de Frobenius: cas des courbes}
\author{Daniel Caro}

\date{}
\maketitle

\selectlanguage{english}
\begin{abstract}
By using Christol and Mebkhout's algebrization and finiteness theorem, 
we prove that in the case of smooth curves, 
Berthelot's strongest conjecture on the stability of holonomicity 
is still valid  without Frobenius structure 
but under some non-Liouville type hypotheses.

\bigskip
\noindent Mathematics Subject Classification 2010: 14F30, 14F10
\end{abstract}

\selectlanguage{frenchb}

\tableofcontents

\section*{Introduction}
Soit $\V$ un anneau de valuation discrète complet d'inégales caractéristiques $(0,p)$,
de corps résiduel $k$ supposé parfait, de corps des fractions $K$.
Soient $\X$ un $\V$-schéma formel lisse, $T_0$ un diviseur de la fibre spéciale $X_0$ de $\X$, $\U$ l'ouvert de $\X$ complémentaire de $T_0$.
Berthelot a construit le faisceau sur $\X$ des opérateurs différentiels de niveau fini et d'ordre infini
noté $\D ^\dag _{\X,\Q}$ ; ce dernier correspondant à la tensorisation par $\Q$ (indiqué par l'indice $\Q$) du complété faible $p$-adique (indiqué par le symbole {\og $\dag$\fg})
du faisceau classique $\D  _{\X}$ des opérateurs différentiels sur $\X$.
Enfin, en ajoutant des singularités surconvergentes le long de $T_0$, il construit le faisceau 
$\D ^\dag _{\X} (\hdag T_0)_{\Q}$ sur $\X$ (voir \cite{Be1} ou \cite{Beintro2}).
On désigne par $F\text{-} D ^{\mathrm{b}} _{\mathrm{coh}}(\D ^\dag _{\X} (\hdag T_0)_{\Q})$
la catégorie des complexes de $\D ^\dag _{\X} (\hdag T_0)_{\Q}$-modules à cohomologie cohérente et bornée
munie d'une structure de Frobenius, i.e. d'un isomorphisme $\D ^\dag _{\X} (\hdag T_0)_{\Q}$-linéaire
de la forme $F ^* (\E) \riso \E$ avec $F^*$ désignant l'image inverse par l'endomorphisme (ou une puissance) du Frobenius absolu 
$X _0\to X_0$. 
 
Soit $\E\in F\text{-} D ^{\mathrm{b}} _{\mathrm{coh}}(\D ^\dag _{\X} (\hdag T_0)_{\Q})$.
Berthelot a conjecturé (voir \cite[5.3.6.D)]{Beintro2}) que
le $F$-complexes $\E$ est 
holonome si et seulement
si $\E |\U$ est 
holonome.
Cette conjecture a été validée dans le cas où $X_0$ est une courbe propre (voir \cite{caro_courbe-nouveau})
puis une courbe (voir \cite{caro-Tsuzuki-2008}) et enfin une variété projective(voir \cite{caro-stab-holo}).

Nous nous intéressons ici à une extension sans structure de Frobenius de cette conjecture. 
D'une part, via la caractérisation homologique de l'holonomie de Virrion (voir \cite{virrion}), il est possible d'étendre de manière naturelle la notion 
d'holonomie en nous affranchissant de la structure de Frobenius. 
Cependant, si on ne fait aucune hypothèse supplémentaire qui permettent d'exclure les problèmes liés aux nombres de Liouville, 
la conjecture de Berthelot sans structure de Frobenius est fausse. 
En effet, lorsque $X_0$ est propre, le problème est qu'il existe des $\D ^\dag _{\X} (\hdag T_0)_{\Q}$-modules cohérents, 
$\O _{\X} (\hdag T_0)_{\Q}$-cohérents dont les espaces de cohomologie $p$-adique ne sont pas de dimension finie 
et qui ne sont par conséquent même pas $\D ^\dag _{\X,\Q}$-cohérents
(par exemple, il suffit de considérer l'exemple donné par Berthelot à la fin de \cite{Be1}).
Mais, 
nous disposons de théorèmes d'algébrisation et de finitude de Christol-Mebkhout pour les isocristaux surconvergents sur les courbes lisses 
qui satisfont à certaines hypothèses de non Liouvillité  (voir l'introduction de \cite{christol-MebkhoutIV}).
Ces hypothèses de non Liouvillité 
sont résumés ici via l'appellation (d'origine contrôlée) {\og propriété $(NL\text{-}NL)$\fg}.
Par exemple, cette propriété $(NL\text{-}NL)$ est satisfaite lorsque l'on dispose d'une structure de Frobenius.

Dans ce papier, nous établissons que 
la conjecture de Berthelot reste valable sans structure de Frobenius dans le cas des courbes pour les complexes se dévissant en 
isocristaux surconvergents satisfaisant à la propriété $(NL\text{-}NL)$ (en fait, on prouve un peu mieux via le théorème \ref{conjD-sansFrob}). 

Ce papier se compose d'une première partie où nous faisons le lien entre les modules solubles utilisés par Christol-Mebkhout et 
les isocristaux surconvergents décris via les $\D$-modules arithmétiques de Berthelot. Dans une seconde partie, nous 
étudions les propriétés (en particulier l'holonomie) 
de l'image du foncteur introduit par Christol-Mebkhout (que l'on appelle ici {\og foncteur de daguification\fg})
dans leur théorème d'algébrisation. 
Nous donnons dans une troisième partie une interprétation via les $\D$-modules de Berthelot 
du théorème de finitude cohomologique de Christol et Mebkhout.
Enfin, nous établissons dans une dernière partie la conjecture de Berthelot sous les conditions $(NL\text{-}NL)$ dans le cas des courbes.
En particulier, nous obtenons une nouvelle preuve (on remplace ici le théorème de la réduction semi-stable de Kedlaya par ceux cités ci-dessus de Christol-Mebkhout) de la conjecture de Berthelot avec structure de Frobenius dans le cas des courbes.

\section*{Notations}

Soit $\V$ un anneau de valuation discrète complet d'inégales caractéristiques $(0,p)$,
de corps résiduel $k$ supposé parfait, de corps des fractions $K$.
Soient $X $ un $\V$-schéma projectif et lisse, 
$Z$ un diviseur ample de $X$ et $U$ l'ouvert de $X$ complémentaire de $Z$. 
On suppose $U=\mathrm{Spec}\, A$ affine et on note
$j\,:\, U \subset X$ l'inclusion canonique. 
On désigne par $X ^{\dag}$ et $U ^{\dag}$ les
$\V$-schémas formels faibles lisses déduits par complétion faible $p$-adique
de respectivement $X$ et $U$ (voir \cite{meredith-weakformalschemes}) ; 
par respectivement $\X$ et $\U$ les complétés $p$-adiques ; par $X _0$, $U _0$ et $Z _0$ 
les fibres spéciales respectives de $X$, $U$ et $Z$.
On suppose pour simplifier que $X _{0}$ est de dimension pure notée
$d _{X _{0}}$.
On dispose du morphisme structural 
$f\,:\, \X \to \mathrm{Spf} \V$,
des morphismes d'espaces annelés
$\epsilon\,:\, X ^{\dag} \to X$ 
et
$\epsilon _{U}\,:\, U ^{\dag} \to U$. 
On note $j _{0}\,:\, U ^{\dag} \subset X ^{\dag}$ l'inclusion canonique. 
Sans nuire à la généralité, nous supposerons que les $k$-schémas sont toujours réduits.

Nous utiliserons les notations usuelles sur les $\D$-modules arithmétiques que nous ne rappelons pas ici (e.g., voir \cite{Beintro2} et le premier chapitre de 
\cite{caro_surholonome}).
Le module des sections globales de faisceaux de la forme $\D _{*} ^{?}$ avec $*$ et $?$ quelconques
est noté $D _{*} ^{?}$, e.g.
$D _{U,\Q}:=\Gamma (U, \D _{U,\Q})$,
$D ^\dag _{U^\dag ,\Q}:=\Gamma (U^\dag , \D ^\dag _{U^\dag ,\Q})$,
etc. De plus, pour tout faisceau $\E$ de groupes, on pose $\E _{\Q}:= \E \otimes _{\Z} \Q$.

\section{Isocristaux surconvergents sur $U _0$}

\begin{vide}
[Foncteur $\sp _{+}$]
Soit $\E^\dag$ un $\D _{U^\dag,\Q}$-module cohérent, $\O _{U^\dag,\Q}$-cohérent.
On obtient un
$\D ^{\dag} _{\X} (\hdag Z _0)_{\Q}$-module cohérent 
en posant 
$$\sp _{+} (\E ^\dag):= \D ^{\dag} _{\X} (\hdag Z _0)_{\Q} \otimes _{j _{0*} \D _{U^\dag,\Q}} j _{0*} (\E ^{\dag}).$$
Ce foncteur a été défini dans \cite{caro_devissge_surcoh} dans un contexte un peu plus général.
Comme $Z$ est un diviseur ample de $X$, 
nous pouvons dans notre contexte utiliser les théorèmes de type $A$ de Noot-Huyghe 
(voir \cite{huyghe-comparaison}) pour les 
$\D ^{\dag} _{\X} (\hdag Z _0)_{\Q} $-modules cohérents.
Nous redémontrons et surtout affinons (via l'isomorphisme \ref{sp+-isom-isoc} qui nous sera utile) 
dans notre contexte (plus facile) les résultats analogues avec structure de Frobenius de 
\cite{caro-2006-surcoh-surcv} ou 
sans structure de Frobenius de \cite{caro-pleine-fidelite}.
\end{vide}

\begin{vide}
[Catégories d'isocristaux surconvergents sur $U _0$]
\begin{itemize}
\item On note $\mathrm{Isoc} ^\dag (A ^\dag _{K})$ la catégorie
des $A ^\dag _{K}$-modules cohérents munis d'une connexion surconvergente. 
D'après \cite[5]{Berig}, cette catégorie est canoniquement isomorphe à celle
des isocristaux surconvergents sur $U _{0}$. 

\item On note $\mathrm{Isoc} ^\dag (\O _{U ^{\dag},\Q})$ la catégorie
des $\D _{U^\dag,\Q}$-modules cohérents, $\O _{U^\dag,\Q}$-cohérents tels que 
le module de leurs sections globales soit un élément de $\mathrm{Isoc} ^\dag (A ^\dag _{K})$.

\item On note $\mathrm{Isoc} ^\dag (\X, Z _0 /K)$ la catégorie
des $\D ^{\dag} _{\X} (\hdag Z _0)_{\Q} $-modules cohérents, 
$\O _{\X} (\hdag Z _0)_{\Q} $-cohérents.
Cette catégorie est canoniquement (via les foncteurs quasi-inverses image directe et image inverse par le morphisme de spécialisation de $\X$) isomorphe à celle
des isocristaux surconvergents sur $U _{0}$.
\end{itemize}

\end{vide}

\begin{vide}
\label{eq-3Isoc}
Via les théorème de type $A$:

\begin{itemize}
\item Les foncteurs 
sections globales $\Gamma (U ^\dag, -)$
et 
$\D _{U^\dag,\Q}  \otimes _{D _{U,\Q} } -$
(ou $\O _{U^\dag,\Q} \otimes _{A_{\Q} } -$) 
 induisent des équivalences quasi-inverses
entre $\mathrm{Isoc} ^\dag (A ^\dag _{K})$
et $\mathrm{Isoc} ^\dag (\O _{U ^{\dag},\Q})$.

\item Les foncteurs 
sections globales $\Gamma (\X, -)$ 
et 
$\D ^{\dag} _{\X} (\hdag Z _0)_{\Q} \otimes _{D ^{\dag} _{\X} (\hdag Z _0)_{\Q}} -$
(ou $\D  _{\X} (\hdag Z _0)_{\Q} \otimes _{D  _{\X} (\hdag Z _0)_{\Q}} -$
ou $\O _{\X} (\hdag Z _0)_{\Q} \otimes _{A ^\dag _K} -$) 
induisent des équivalences quasi-inverses
entre $\mathrm{Isoc} ^\dag (A ^\dag _{K})$
et $\mathrm{Isoc} ^\dag (\X, Z _0 /K)$.

\end{itemize}

\end{vide}

\begin{prop}
\label{sp+-isom-isoc-prop}
Le foncteur $\sp _{+}$ se factorise en une équivalence de catégories
de la forme 
$\sp _{+}\,:\,\mathrm{Isoc} ^\dag (\O _{U ^{\dag},\Q})
\cong 
\mathrm{Isoc} ^\dag (\X, Z _0 /K)$.
Pour tout $\E ^\dag \in \mathrm{Isoc} ^\dag (\O _{U ^{\dag},\Q})$, 
on dispose en outre de l'isomorphisme canonique : 
\begin{equation}
\label{sp+-isom-isoc}
\Gamma ( \X, \sp _{+} (\E ^\dag) ) 
\riso 
\Gamma ( U ^\dag, \E ^\dag ) .
\end{equation}
\end{prop}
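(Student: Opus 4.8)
Le plan est de ramener l'\'equivalence annonc\'ee aux deux \'equivalences de type $A$ de \ref{eq-3Isoc}, via une comparaison sur les sections globales. D'apr\`es \ref{eq-3Isoc}, les foncteurs $\Gamma (U ^\dag, -)$ et $\Gamma (\X, -)$ induisent des \'equivalences respectivement de $\mathrm{Isoc} ^\dag (\O _{U ^{\dag},\Q})$ et de $\mathrm{Isoc} ^\dag (\X, Z _0 /K)$ vers $\mathrm{Isoc} ^\dag (A ^\dag _{K})$. Il suffira donc d'\'etablir deux choses : que $\sp _{+}$ envoie bien $\mathrm{Isoc} ^\dag (\O _{U ^{\dag},\Q})$ dans $\mathrm{Isoc} ^\dag (\X, Z _0 /K)$, et l'isomorphisme fonctoriel \ref{sp+-isom-isoc}, c'est-\`a-dire $\Gamma (\X, -) \circ \sp _{+} \riso \Gamma (U ^\dag, -)$. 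La factorisation de $\sp _{+}$ en une \'equivalence en r\'esultera formellement, $\sp _{+}$ \'etant alors isomorphe \`a la compos\'ee de l'\'equivalence $\Gamma (U ^\dag, -)$ avec un quasi-inverse de $\Gamma (\X, -)$.

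L'\'etape cl\'e, et le principal obstacle technique, est la comparaison entre l'extension des scalaires par les op\'erateurs diff\'erentiels et celle par les fonctions. Je commencerais par v\'erifier que, pour $\E ^\dag \in \mathrm{Isoc} ^\dag (\O _{U ^{\dag},\Q})$, le morphisme canonique
\begin{equation*}
\O _{\X} (\hdag Z _0)_{\Q} \otimes _{j _{0*} \O _{U^\dag,\Q}} j _{0*} (\E ^{\dag})
\longrightarrow
\D ^{\dag} _{\X} (\hdag Z _0)_{\Q} \otimes _{j _{0*} \D _{U^\dag,\Q}} j _{0*} (\E ^{\dag}) = \sp _{+} (\E ^\dag)
\end{equation*}
est un isomorphisme de $\O _{\X} (\hdag Z _0)_{\Q}$-modules. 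Comme $\E ^\dag$ est $\O _{U^\dag,\Q}$-coh\'erent et muni d'une connexion surconvergente, les op\'erateurs diff\'erentiels agissent \`a travers cette connexion et n'ajoutent aucune section au del\`a de l'extension $\O$-lin\'eaire ; cet \'enonc\'e \'etant local, il se ram\`ene, apr\`es choix de coordonn\'ees locales, \`a la structure de $\D ^{\dag} _{\X} (\hdag Z _0)_{\Q}$ comme extension de $\O _{\X} (\hdag Z _0)_{\Q}$. En particulier $\sp _{+} (\E ^\dag)$ est $\O _{\X} (\hdag Z _0)_{\Q}$-coh\'erent, donc appartient \`a $\mathrm{Isoc} ^\dag (\X, Z _0 /K)$. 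Cette comparaison est l'analogue, dans notre contexte plus simple (courbes, diviseur ample), des r\'esultats de \cite{caro-2006-surcoh-surcv} et \cite{caro-pleine-fidelite}.

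Il restera \`a calculer les sections globales. Puisque $U = X \setminus Z$ est affine, les sections de $\O _{\X} (\hdag Z _0)_{\Q}$ sur $\X$ sont les fonctions surconvergentes, d'o\`u $\Gamma (\X, \O _{\X} (\hdag Z _0)_{\Q}) = A ^\dag _{K} = \Gamma (U ^\dag, \O _{U^\dag,\Q})$. En \'ecrivant $\E ^\dag \cong \O _{U^\dag,\Q} \otimes _{A ^\dag _{K}} \Gamma (U ^\dag, \E ^\dag)$ (th\'eor\`eme de type $A$ pour $\O _{U^\dag,\Q}$) et en appliquant $\Gamma (\X, -)$ \`a l'isomorphisme $\O$-lin\'eaire pr\'ec\'edent, le th\'eor\`eme de type $A$ de Noot-Huyghe (\cite{huyghe-comparaison}, valable car $Z$ est ample : il fournit l'acyclicit\'e et l'engendrement par les sections globales des $\O _{\X} (\hdag Z _0)_{\Q}$-modules coh\'erents) montre que $\Gamma (\X, -)$ commute \`a l'extension des scalaires, de sorte que
\begin{equation*}
\Gamma (\X, \sp _{+} (\E ^\dag))
\riso
A ^\dag _{K} \otimes _{A ^\dag _{K}} \Gamma (U ^\dag, \E ^\dag)
\riso
\Gamma (U ^\dag, \E ^\dag).
\end{equation*}
La fonctorialit\'e et la compatibilit\'e \`a la structure de $A ^\dag _{K}$-module sont claires sur cette construction, ce qui fournit \ref{sp+-isom-isoc}.

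Enfin, combinant cet isomorphisme $\Gamma (\X, -) \circ \sp _{+} \riso \Gamma (U ^\dag, -)$ avec les deux \'equivalences de type $A$, on conclut que $\sp _{+}$ est une \'equivalence de cat\'egories, un quasi-inverse explicite \'etant $\E \mapsto \O _{U ^\dag,\Q} \otimes _{A ^\dag _{K}} \Gamma (\X, \E)$. Je m'attends \`a ce que toute la difficult\'e se concentre dans la comparaison $\O$-lin\'eaire contre $\D$-lin\'eaire de la deuxi\`eme \'etape ; une fois celle-ci acquise, le reste d\'ecoule formellement des th\'eor\`emes de type $A$ d\'ej\`a \'enonc\'es.
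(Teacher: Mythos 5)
Your overall architecture matches the paper's: reduce everything to the two type-$A$ equivalences of \ref{eq-3Isoc}, plus one comparison isomorphism identifying $\sp _{+} (\E ^\dag)$ with a scalar extension of $E ^\dag = \Gamma (U ^\dag, \E ^\dag)$; granting that comparison, your formal deductions (the equivalence, then \ref{sp+-isom-isoc}) are correct. The genuine gap is that your ``\'etape cl\'e'' is asserted rather than proved. The claim that
\begin{equation*}
\O _{\X} (\hdag Z _0)_{\Q} \otimes _{j _{0*} \O _{U^\dag,\Q}} j _{0*} (\E ^{\dag})
\longrightarrow
\sp _{+} (\E ^\dag)
\end{equation*}
is an isomorphism is essentially equivalent to the proposition itself, and your justification --- the differential operators ``n'ajoutent aucune section au del\`a de l'extension $\O$-lin\'eaire'', and locally this follows from ``la structure de $\D ^{\dag} _{\X} (\hdag Z _0)_{\Q}$ comme extension de $\O _{\X} (\hdag Z _0)_{\Q}$'' --- is a restatement of the claim, not an argument. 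No purely formal or local-coordinate argument of this kind can work: it would apply verbatim to any coherent $\O _{U^\dag,\Q}$-module with integrable connection, whereas this step is exactly where the surconvergence of the connection must enter (the infinite-order operators of $\D ^{\dag} _{\X} (\hdag Z _0)_{\Q}$ act through Taylor series whose convergence is precisely what surconvergence provides).

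The paper closes this gap with three non-formal inputs, none of which appear in your proof: (a) commuting $j _{0*}$ with scalar extension, i.e. $j _{0*} \D _{U^\dag,\Q} \otimes _{D _{U^\dag,\Q} } E ^\dag \riso j _{0*} (\E ^{\dag})$, obtained by choosing a finite presentation of $E ^\dag$ and invoking \cite[2.2.9]{caro_devissge_surcoh} --- you need this implicitly as well, both to rewrite your source as $\O _{\X} (\hdag Z _0)_{\Q} \otimes _{A ^\dag _K} E ^\dag$ and to carry out your final global-sections computation, since a tensor product over the sheaf $j _{0*} \O _{U^\dag,\Q}$ is not directly computable; (b) Noot-Huyghe's comparison $D _{\X} (\hdag Z _0)_{\Q} = D _{U^\dag ,\Q}$ (\cite{huyghe-comparaison}), which turns $\sp _{+} (\E ^\dag)$ into $\D ^{\dag} _{\X} (\hdag Z _0)_{\Q} \otimes _{D _{\X} (\hdag Z _0)_{\Q} } E ^\dag$; and (c) Berthelot's theorem (\cite[2]{Be0}) showing that for $\G \in \mathrm{Isoc} ^\dag (\X, Z _0 /K)$ the canonical map $\D ^\dag _{\X} (\hdag Z _0)_{\Q} \otimes _{\D _{\X} (\hdag Z _0)_{\Q} } \G \to \G$ is an isomorphism --- and even this is not checked ``en coordonn\'ees locales'' but via the localization principle for coherent $\D ^{\dag} _{\X} (\hdag Z _0)_{\Q}$-modules: it suffices to verify it outside $Z$, where it becomes the statement for convergent isocrystals. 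If you supply (a), (b) and (c), your outline becomes the paper's proof; without them, the central step is unproven.
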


\begin{proof}
Soit $\E^\dag\in \mathrm{Isoc} ^\dag (\O _{U ^{\dag},\Q})$.
D'après le premier point de \ref{eq-3Isoc}, 
$E ^\dag := \Gamma (U ^{\dag}, \E ^\dag) \in \mathrm{Isoc} ^\dag (A ^\dag _{K})$ et
le morphisme canonique 
$\D _{U^\dag,\Q} \otimes _{D _{U^\dag,\Q} } E ^\dag
\to
\E^\dag$ est un isomorphisme. 
En choisissant une présentation finie de $E ^\dag$, 
on déduit alors de \cite[2.2.9]{caro_devissge_surcoh} l'isomorphisme canonique
$j _{0*} \D _{U^\dag,\Q} \otimes _{D _{U^\dag,\Q} } E ^\dag 
\riso j _{0*} (\E ^{\dag})$.
Il en résulte l'isomorphisme
\begin{equation}
\label{sp+isogen}
\sp _{+} (\E ^\dag) 
\riso 
\D ^{\dag} _{\X} (\hdag Z _0)_{\Q} \otimes _{D _{U^\dag ,\Q} } E ^\dag
=
\D ^{\dag} _{\X} (\hdag Z _0)_{\Q} \otimes _{D _{\X} (\hdag Z _0)_{\Q} } E ^\dag,
\end{equation}
l'égalité découlant de la formule 
$D _{\X} (\hdag Z _0)_{\Q} = D  _{U^\dag ,\Q}$ 
vérifiée par Noot-Huyghe (voir \cite{huyghe-comparaison}).
D'après \ref{eq-3Isoc}, $\G := \D  _{\X} (\hdag Z _0)_{\Q}
\otimes _{D _{\X} (\hdag Z _0)_{\Q} }
E ^\dag \in \mathrm{Isoc} ^\dag (\X, Z _0 /K)$.
Il en résulte que le morphisme canonique 
$\D ^\dag  _{\X} (\hdag Z _0)_{\Q}
\otimes _{\D _{\X} (\hdag Z _0)_{\Q} }
\G \to \G$ est un isomorphisme (e.g. il suffit de le voir en dehors de $Z$ 
et utiliser la version analogue sans diviseur de \cite[2]{Be0}).
On en déduit l'isomorphisme:
\begin{equation}
\D ^{\dag} _{\X} (\hdag Z _0)_{\Q} \otimes _{D _{\X} (\hdag Z _0)_{\Q} } E ^\dag 
\riso 
\D  _{\X} (\hdag Z _0)_{\Q}
\otimes _{D _{\X} (\hdag Z _0)_{\Q} }
E ^\dag.
\end{equation}
D'après le deuxième point de \ref{eq-3Isoc}, 
on bénéficie de l'isomorphisme dans $\mathrm{Isoc} ^\dag (\X, Z _0 /K)$:
$\D  _{\X} (\hdag Z _0)_{\Q}
\otimes _{D _{\X} (\hdag Z _0)_{\Q} }
E ^\dag
\riso 
\D ^\dag  _{\X} (\hdag Z _0)_{\Q}
\otimes _{D ^\dag _{\X} (\hdag Z _0)_{\Q} }
E ^\dag$.
En composant ces trois isomorphismes, on obtient l'isomorphisme qui nous permet de conclure: 
\begin{equation}
\label{sp+isogenFIN}
\sp _{+} (\E ^\dag) 
\riso 
\D ^\dag  _{\X} (\hdag Z _0)_{\Q}
\otimes _{D ^\dag _{\X} (\hdag Z _0)_{\Q} }
E ^\dag.
\end{equation}
\end{proof}

\section{Foncteur daguification}
\begin{vide}

On note $\mathrm{MC} (\O _{U^{\dag}, \Q} )$ la catégorie
 des $\D _{U^{\dag}, \Q}$-modules cohérents, $\O _{U^{\dag}, \Q}$-cohérents ; 
 $\mathrm{MC} (\O _{U, \Q} )$ la catégorie
 des $\D _{U, \Q}$-modules cohérents, $\O _{U, \Q}$-cohérents.
 On dispose du foncteur canonique : 
 \begin{equation}
 \label{foncteur-dag}
 \dag\,:\, 
 \mathrm{MC} (\O _{U, \Q} )
 \to 
 \mathrm{MC} (\O _{U^{\dag}, \Q} )
\end{equation}
défini par 
$\E \mapsto \E ^\dag :=  \O _{U^{\dag}, \Q} \otimes _{\epsilon ^{-1} \O _{U, \Q}} \epsilon ^{-1} \E$.
Ce foncteur est bien défini d'après le paragraphe \ref{foncteurdag-def} (plus précisément \ref{ThA-Edag}).
Dans la suite de ce chapitre, on se donne $\E \in \mathrm{MC} (\O _{U^{\dag}, \Q} )$.
On note $\E ^\dag$ son image par le foncteur \ref{foncteur-dag} et
on pose $E:= \Gamma (U, \E )$ et $E ^\dag := \Gamma (U^\dag, \E ^\dag)$.

\end{vide}

\begin{vide}
[Description du foncteur $\dag$]
\label{foncteurdag-def}
D'après les théorèmes de type $A$ sur 
les $\O _{U,\Q}$-modules ou $\D _{U,\Q}$-modules cohérents, 
on dispose des isomorphismes canoniques: 
\begin{equation}
\O _{U,\Q} \otimes _{A _\Q } E 
\riso
\D _{U,\Q} \otimes _{D _{U,\Q} } E 
\riso
\E.
\end{equation}
Comme le morphisme canonique
$\O _{U^{\dag}, \Q} \otimes _{\epsilon ^{-1} \O _{U, \Q}} 
\epsilon ^{-1} \D _{U,\Q} 
\to 
\D _{U^\dag,\Q}$
est un isomorphisme, 
comme le foncteur \ref{foncteur-dag} est exacte à droite, 
comme $E$ un $D _{U,\Q}$-module de présentation finie, 
on en déduit les isomorphismes canoniques:
\begin{equation}
\label{ThA-Edag}
\O _{U^\dag,\Q} 
\otimes _{A_{\Q} } 
E
\riso
\D _{U^\dag,\Q} 
\otimes _{D _{U,\Q} } 
E 
\riso \E ^\dag.
\end{equation}
Via le théorème de type $A$ sur 
les $\D _{U^\dag ,\Q}$-modules cohérents,
il en dérive
$D _{U^\dag ,\Q}
\otimes _{D _{U,\Q} } 
E 
\riso
E ^\dag$.

\end{vide}

\begin{lemm}
\label{j*Dcoh=coh}
Le $\D _{X,\Q}$-module $j _{*} (\E)$ est cohérent.
De plus, pour tout $l \not = d _{X _{0}}$, $\mathcal{E}xt ^{l} _{\D _{X,\Q}} (j _{*} (\E), \D _{X,\Q})=0$.
\end{lemm}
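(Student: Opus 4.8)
I would prove this by reducing to the generic fibre, where it becomes a standard assertion about holonomic $\D$-modules over the characteristic-zero field $K$, and then reading off both statements from holonomicity.

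First the reduction. Since $-\otimes_{\Z}\Q$ agrees with $-\otimes_{\V}K$ on the $\V$-algebras involved, the sheaves $\O_{X,\Q}$ and $\D_{X,\Q}$ are $p$-torsion–free and supported on the generic fibre $X_K:=X\setminus X_0$, an open subscheme of $X$ on which $\D_{X,\Q}$ restricts to the usual ring $\D_{X_K}$ of differential operators of the smooth projective $K$-variety $X_K$ (of dimension $d:=d_{X_0}$). As coherence and the formation of $\mathcal{E}xt$-sheaves are local and are detected on $X_K$, and as $j$ restricts there to the open immersion $j_K\colon U_K\hookrightarrow X_K$ with complement the divisor $Z_K$, it suffices to show that $j_{K*}(\E_K)$ is a coherent $\D_{X_K}$-module with $\mathcal{E}xt^{l}_{\D_{X_K}}(j_{K*}\E_K,\D_{X_K})=0$ for $l\neq d$, where $\E_K$ denotes the restriction of $\E$ to $U_K$.

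The plan is to identify $j_{K*}\E_K$ as a holonomic $\D_{X_K}$-module. Since $\E\in\mathrm{MC}(\O_{U,\Q})$ is $\O_{U,\Q}$-coherent, $\E_K$ is an integrable connection on the smooth variety $U_K$; its characteristic variety is the zero section, of dimension $d=\dim U_K$, so $\E_K$ is holonomic over $\D_{U_K}$. Because $Z$ is a divisor, $Z_K$ is locally principal and $j_K$ is an affine open immersion, so $R^{q}j_{K*}\E_K=0$ for $q>0$ and the $\D$-module direct image $j_{K+}\E_K$ reduces to the single sheaf $j_{K*}\E_K$ placed in degree $0$. By the stability of holonomicity under direct image along an open immersion, $j_{K+}\E_K$ is holonomic; in particular $j_{K*}\E_K$ is a coherent $\D_{X_K}$-module, which yields the coherence of $j_*\E$ after transporting back to $X$.

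For the vanishing I would invoke the purity characterisation of holonomicity: a nonzero coherent $\D$-module $M$ on a smooth $d$-dimensional variety is holonomic if and only if its characteristic variety is pure of dimension $d$, equivalently $\mathcal{E}xt^{l}_{\D}(M,\D)=0$ for every $l\neq d$ (with $\mathcal{E}xt^{d}_{\D}(M,\D)$ again holonomic). Applied to the holonomic module $M=j_{K*}\E_K$ this gives precisely $\mathcal{E}xt^{l}_{\D_{X_K}}(j_{K*}\E_K,\D_{X_K})=0$ for $l\neq d$, hence $\mathcal{E}xt^{l}_{\D_{X,\Q}}(j_*\E,\D_{X,\Q})=0$ for $l\neq d_{X_0}$. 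The main obstacle is the coherence of $j_*\E$: a priori $j_*\E$ is only quasi-coherent over $\O_X$ (its sections over an affine are localised along $Z$), and finiteness over $\D_{X,\Q}$ is not visible locally from the connection alone. This is exactly what holonomicity buys, the substantive point being the stability of holonomicity under push-forward by $j_K$ (equivalently, that the Bernstein inequality is saturated for $j_{K*}\E_K$); coherence and the $\mathcal{E}xt$-vanishing are then two faces of the same purity statement. I would also make sure the reduction to $X_K$ is legitimate, which is where the hypotheses that $K$ has characteristic $0$ and that $X$ is smooth are genuinely used.
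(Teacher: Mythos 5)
Your proposal is correct and follows essentially the same route as the paper: reduce to the generic fibre $X_K$ (characteristic zero), observe that $\E _K$ is an integrable connection hence holonomic, invoke stability of holonomicity under direct image along the affine open immersion $j_K$, and read off both coherence and the $\mathcal{E}xt$-vanishing from the homological (purity) characterization of holonomicity. The only real difference is bookkeeping: the paper transfers between $X$ and $X_K$ via global sections and the type $A$/$B$ theorems, which is also the correct repair of your imprecise claim that $\O _{X,\Q}$ and $\D _{X,\Q}$ are \emph{supported} on $X_K$ (their stalks along $X_0$ do not vanish; what is true, and what both arguments actually use, is that quasi-coherent $\Q$-tensored sheaves such as $j_*(\E)$ are determined by their sections over affines, which coincide with sections over the generic fibre).
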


\begin{proof}
L'assertion est locale en $X$. On peut supposer $X$ affine, muni de coordonnées locales et $Z$ défini par une équation locale. 
D'après le théorème de type $A$ sur les $\D _{U,\Q}$-modules cohérents (resp. les $\O _{U,\Q}$-modules cohérents), 
$E:= \Gamma (U, \E)$ est un $\Gamma (U,  \D _{U,\Q})$-module cohérent (resp. $\Gamma (U, \O _{U,\Q})$-module cohérent).
Or, $\Gamma (U,  \D _{U,\Q})=\Gamma (U _K,  \D _{U_K})$ et $\Gamma (U,  \O _{U,\Q})=\Gamma (U _K,  \O _{U_K})$.
On note 
$\FF:= \D _{U_K}  \otimes _{\Gamma (U _K,  \D _{U_K})} E \liso \O _{U_K}  \otimes _{\Gamma (U _K,  \O _{U_K})} E$
le $\D _{U_K}$-module cohérent, $\O _{U_K}$-cohérent correspondant.

Comme $U_K$ est une variété sur $K$ qui est de caractéristique nulle, 
comme $\FF$ est un $\D _{U_K}$-module holonome (car $\O _{U_K}$-cohérent), 
alors par préservation de l'holonomie par image directe, 
$j _{K*} (\FF)$ est une $\D _{X_K}$-module holonome.
Comme l'entier $d _{X _{0}}$ est aussi la dimension de $X _K$, 
d'après les théorèmes de type $A$ et $B$, il en résulte que 
$\Gamma (X _K, j _{K*} (\FF))$ 
est un 
$D _{X_K}$-module cohérent
et que, pour tout $l \not = d _{X _{0}}$, $\mathrm{Ext} ^{l} _{D _{X_K}} (\Gamma (X _K, j _{K*} (\FF)), D _{X_K})=0$.
Comme 
$D _{X,\Q}=D _{X_K}$
et 
$\Gamma (X , j _{*} (\E)) = \Gamma (X _K, j _{K*} (\FF))$,
alors
$\Gamma (X , j _{*} (\E))$ est $D _{X,\Q}$-cohérent. 
La quasi-cohérence de $j _{*} (\E)$ nous permet alors d'en conclure que 
$j _{*} (\E)$ est $\D _{X,\Q}$-cohérent. 
Via les théorèmes de type $A$ et $B$, il en résulte que 
$\mathcal{E}xt ^{l} _{\D _{X,\Q}} (j _{*} (\E), \D _{X,\Q})=0$
si et seulement si 
$\mathrm{Ext} ^{l} _{D _{X,\Q}} (\Gamma (X , j _{*} (\E)), D _{X,\Q})=0$.
D'où le résultat.
\end{proof}

\begin{vide}
\label{defpropspssdag}
Grâce au lemme \ref{j*Dcoh=coh}, on obtient un $\D _{X^\dag,\Q} $-module cohérent en posant
\begin{equation}
\label{spssdag}
j ^\dag _* (\E):=
\D _{X^\dag,\Q} \otimes _{\epsilon ^{-1}\D _{X,\Q}} \epsilon ^{-1} (j _* \E).
\end{equation} 
\end{vide}

\begin{defi}
[Holonomie]
D'après \cite{caro-holo-sansFrob}, un $\D ^{\dag} _{\X,\Q} $-module $\G$ est par définition holonome
s'il est $\D ^{\dag} _{\X,\Q} $-cohérent et si,
pour tout entier $l \not = d _{X _{0}}$,  
$\mathcal{E}xt ^{l} _{\D ^{\dag} _{\X,\Q}} (
\G, \D ^{\dag} _{\X,\Q})=0$.
Lorsque l'on dispose d'une structure de Frobenius, on retrouve la définition de Berthelot. 
\end{defi}

\begin{lemm}
\label{j*dag-holo}
Avec les notations de \ref{defpropspssdag}, 
le $\D ^{\dag} _{\X,\Q} $-module 
$\D ^{\dag} _{\X,\Q} 
\otimes _{\D _{X^\dag,\Q}}j ^\dag _* (\E)$
est holonome.
\end{lemm}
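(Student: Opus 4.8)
The plan is to deduce holonomy of $\G := \D^{\dag}_{\X,\Q}\otimes_{\D_{X^\dag,\Q}} j^\dag_*(\E)$ directly from Lemma \ref{j*Dcoh=coh}, by transporting its two conclusions — the $\D_{X,\Q}$-coherence of $j_*(\E)$ and the vanishing $\mathcal{E}xt^l_{\D_{X,\Q}}(j_*(\E),\D_{X,\Q})=0$ for $l\neq d_{X_0}$ — along the successive extension-of-scalars functors $\epsilon^{-1}(-)$ followed by $\D_{X^\dag,\Q}\otimes_{\epsilon^{-1}\D_{X,\Q}}(-)$, and then $\D^{\dag}_{\X,\Q}\otimes_{\D_{X^\dag,\Q}}(-)$, which by definition \ref{spssdag} produce $j^\dag_*(\E)$ and then $\G$. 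The decisive structural fact is that each of these is extension of scalars along a \emph{flat} morphism of (left and right) coherent sheaves of rings on the common underlying space $|X_0|$, and that such functors commute with $\mathcal{E}xt$ against the structure ring up to base change.

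First I would settle coherence. Since $j_*(\E)$ is $\D_{X,\Q}$-coherent, it is locally of finite presentation; the resulting $\D_{X^\dag,\Q}$-module $j^\dag_*(\E)$ is coherent (this is already the content of \ref{defpropspssdag}), and a final right-exact base change by the coherent ring $\D^{\dag}_{\X,\Q}$ (Berthelot's coherence theorem) exhibits $\G$ as finitely presented, hence coherent. For the vanishing I would use the standard base-change isomorphism: for a flat morphism $R\to S$ of coherent rings and a finitely presented $R$-module $M$, a local resolution of $M$ by finite free $R$-modules stays a resolution after $S\otimes_R(-)$ and yields, after applying $\mathrm{Hom}(-,S)$ and pulling the tensor product through cohomology by flatness,
\[
\mathcal{E}xt^l_S(S\otimes_R M,\,S)\ \cong\ S\otimes_R \mathcal{E}xt^l_R(M,R).
\]
Applying this first with $R=\epsilon^{-1}\D_{X,\Q}$, $S=\D_{X^\dag,\Q}$, $M=\epsilon^{-1}(j_*(\E))$ (using that $\epsilon^{-1}$ is exact and commutes with $\mathcal{E}xt$ of finitely presented modules), and then with $R=\D_{X^\dag,\Q}$, $S=\D^{\dag}_{\X,\Q}$, $M=j^\dag_*(\E)$, the two isomorphisms compose to
\[
\mathcal{E}xt^l_{\D^{\dag}_{\X,\Q}}(\G,\,\D^{\dag}_{\X,\Q})\ \cong\ \D^{\dag}_{\X,\Q}\otimes_{\epsilon^{-1}\D_{X,\Q}}\epsilon^{-1}\mathcal{E}xt^l_{\D_{X,\Q}}(j_*(\E),\,\D_{X,\Q}),
\]
which vanishes for every $l\neq d_{X_0}$ by the second assertion of Lemma \ref{j*Dcoh=coh}. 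Coherence together with this vanishing is exactly the definition of holonomy.

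The real work lies in the two flatness inputs. The flatness of $\epsilon^{-1}\D_{X,\Q}\to \D_{X^\dag,\Q}$ reflects the flatness of weak $p$-adic completion over a smooth affine, propagated from $\O$ to $\D$ through the order filtration (whose associated graded is a symmetric algebra on the tangent sheaf); the isomorphism $\O_{U^\dag,\Q}\otimes_{\epsilon^{-1}\O_{U,\Q}}\epsilon^{-1}\D_{U,\Q}\riso\D_{U^\dag,\Q}$ recalled in \ref{foncteurdag-def} is the shadow of this. The genuinely delicate point is the flatness of $\D_{X^\dag,\Q}\to\D^{\dag}_{\X,\Q}$, i.e.\ of the passage to Berthelot's operators of infinite order; this is where I would lean on the coherence and flatness theorems of Berthelot and Noot-Huyghe already invoked for the type $A$ statements above. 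One must also verify that the two $\mathcal{E}xt$ base-change isomorphisms are compatible so that they compose as displayed, which amounts to bookkeeping with the maps $\epsilon$ and $\sp$ on the common space $|X_0|$ and is routine once flatness is granted.
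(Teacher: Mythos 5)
Your proposal is correct and is essentially the paper's own proof: coherence of $\G$ is read off from Lemma \ref{j*Dcoh=coh}, and the $\mathcal{E}xt$-vanishing is transported from $\D _{X,\Q}$ to $\D ^{\dag} _{\X,\Q}$ by flat base change for $\mathcal{E}xt ^{l}(-,\,\cdot\,)$ against the structure ring; the paper merely collapses your two flat steps into one, reducing to the affine case and invoking flatness of the composite extension $D _{X,\Q} \to D ^{\dag} _{\X,\Q}$ (for which it cites the same Berthelot/Noot-Huyghe flatness results you point to). The only bookkeeping slip is that, these rings being noncommutative, $\mathcal{E}xt ^{l} _{R}(M,R)$ is a \emph{right} $R$-module, so the base-change isomorphism should read $\mathcal{E}xt ^{l} _{R}(M,R)\otimes _{R} S$ rather than $S\otimes _{R}\mathcal{E}xt ^{l} _{R}(M,R)$, which changes nothing for the vanishing.
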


\begin{proof}
 La $\D ^{\dag} _{\X,\Q} $-cohérence 
de $\G := \D ^{\dag} _{\X,\Q} 
\otimes _{\D _{X^\dag,\Q}}j ^\dag _* (\E)$ se déduit du lemme \ref{j*Dcoh=coh}. 
Soit $l \not = d _{X _{0}}$ un entier. 
Pour vérifier que 
$\mathcal{E}xt ^{l} _{\D ^{\dag} _{\X,\Q}} (
\D ^{\dag} _{\X,\Q} 
\otimes _{\D _{X^\dag,\Q}}j ^\dag _* (\E), \D ^{\dag} _{\X,\Q})=0$, 
on se ramène au cas où $X$ est affine.
Comme l'extension $D _{X,\Q} \to D ^{\dag} _{\X} (\hdag Z _0)_{\Q}$ est plate, 
cela résulte alors de \ref{j*Dcoh=coh}.
\end{proof}

\begin{lemm}
Avec les notations de \ref{defpropspssdag},
on dispose de l'isomorphisme canonique 
de $\D ^{\dag} _{\X} (\hdag Z _0)_{\Q}$-modules cohérents: 
\begin{equation}
\label{jdagE-Edag}
\D ^{\dag} _{\X} (\hdag Z _0)_{\Q} 
\otimes _{\D _{X^\dag,\Q}}j ^\dag _* (\E) 
\riso \sp _{+} (\E ^\dag).
\end{equation}
\end{lemm}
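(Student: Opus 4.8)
Le plan est de ramener chacun des deux membres de \eqref{jdagE-Edag} \`a un produit tensoriel de la forme $\D ^{\dag} _{\X} (\hdag Z _0)_{\Q} \otimes _{?} E$, o\`u $E = \Gamma (U, \E)$ et o\`u l'anneau de base $?$ vaut $D _{X,\Q}$ pour le membre de gauche et $D _{U,\Q}$ pour le membre de droite ; l'isomorphisme cherch\'e proviendra alors de ce que la tensorisation par $\D ^{\dag} _{\X} (\hdag Z _0)_{\Q}$ rend inoffensif le passage de $D _{X,\Q}$ \`a $D _{U,\Q}$.

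Pour le membre de gauche, on partirait de la d\'efinition \eqref{spssdag} qui, par associativit\'e du produit tensoriel, donne
\[
\D ^{\dag} _{\X} (\hdag Z _0)_{\Q} \otimes _{\D _{X^\dag,\Q}} j ^\dag _* (\E)
\riso
\D ^{\dag} _{\X} (\hdag Z _0)_{\Q} \otimes _{\epsilon ^{-1}\D _{X,\Q}} \epsilon ^{-1} (j _* \E).
\]
Le lemme \ref{j*Dcoh=coh} assure que $j _* (\E)$ est $\D _{X,\Q}$-coh\'erent, et le th\'eor\`eme de type $A$ d\'ej\`a employ\'e dans sa preuve fournira l'isomorphisme $\D _{X,\Q} \otimes _{D _{X,\Q}} E \riso j _* (\E)$, en identifiant $\Gamma (X, j _* \E) = \Gamma (U, \E) = E$. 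En reportant, on obtiendrait l'isomorphisme de $\D ^{\dag} _{\X} (\hdag Z _0)_{\Q}$-modules
\[
\D ^{\dag} _{\X} (\hdag Z _0)_{\Q} \otimes _{\D _{X^\dag,\Q}} j ^\dag _* (\E)
\riso
\D ^{\dag} _{\X} (\hdag Z _0)_{\Q} \otimes _{D _{X,\Q}} E.
\]

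Pour le membre de droite, on remarquerait que la construction de l'isomorphisme \eqref{sp+isogen} (choix d'une pr\'esentation finie de $E ^\dag$, puis \cite[2.2.9]{caro_devissge_surcoh}) ne repose que sur la coh\'erence de $\E ^\dag$ et reste donc valable pour notre $\E ^\dag \in \mathrm{MC} (\O _{U^{\dag}, \Q})$, bien qu'il ne soit pas n\'ecessairement un isocristal surconvergent ; elle donne $\sp _{+} (\E ^\dag) \riso \D ^{\dag} _{\X} (\hdag Z _0)_{\Q} \otimes _{D _{U^\dag ,\Q}} E ^\dag$. Comme \ref{foncteurdag-def} fournit l'isomorphisme $D _{U^\dag ,\Q} \otimes _{D _{U,\Q}} E \riso E ^\dag$, on en tirerait
\[
\sp _{+} (\E ^\dag)
\riso
\D ^{\dag} _{\X} (\hdag Z _0)_{\Q} \otimes _{D _{U,\Q}} E.
\]

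Il resterait alors \`a comparer ces deux produits tensoriels via le morphisme induit par la restriction $D _{X,\Q} \to D _{U,\Q}$, et c'est l\`a le point crucial : il faudrait \'etablir que la fl\`eche naturelle
\[
\D ^{\dag} _{\X} (\hdag Z _0)_{\Q} \otimes _{D _{X,\Q}} D _{U,\Q} \to \D ^{\dag} _{\X} (\hdag Z _0)_{\Q}
\]
est un isomorphisme, ce qui exprime que la localisation surconvergente le long de $Z _0$ inverse d\'ej\`a une \'equation de $Z$ : au voisinage de $Z$, une telle \'equation $t$ devient inversible dans $\O _{\X} (\hdag Z _0)_{\Q} \subset \D ^{\dag} _{\X} (\hdag Z _0)_{\Q}$, de sorte que le passage de $X$ \`a $U$, qui revient \`a inverser $t$, est absorb\'e par la tensorisation. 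En composant les trois isomorphismes pr\'ec\'edents, on aboutirait \`a \eqref{jdagE-Edag}. La principale difficult\'e sera pr\'ecis\'ement de rendre rigoureux ce dernier \'enonc\'e, de nature anneau-th\'eorique : en s'appuyant sur la description de $D ^{\dag} _{\X} (\hdag Z _0)_{\Q}$ par Noot-Huyghe (\cite{huyghe-comparaison}) et sur la structure locale de $Z$, il s'agira de contr\^oler proprement l'interaction entre la localisation en $t$ et la compl\'etion d\'efinissant $\D ^{\dag} _{\X} (\hdag Z _0)_{\Q}$, c'est-\`a-dire de justifier que la surconvergence le long de $Z _0$ absorbe l'\'ecart entre les op\'erateurs diff\'erentiels sur $X$ et ceux sur $U$.
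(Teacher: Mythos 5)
Your reduction of the right-hand side is essentially sound: via \ref{ThA-Edag}, a finite presentation of $E$ over $D _{U,\Q}$ and \cite[2.2.9]{caro_devissge_surcoh}, one does get $\sp _{+} (\E ^\dag) \riso \D ^{\dag} _{\X} (\hdag Z _0)_{\Q} \otimes _{D _{U,\Q}} E$. But your other two steps have genuine gaps, and both come from the same source: you work with rings of \emph{global} sections on the projective scheme $X$, where no theorem of type $A$ exists. First, the isomorphism $\D _{X,\Q} \otimes _{D _{X,\Q}} E \riso j _{*} (\E)$ is not a consequence of \ref{j*Dcoh=coh} nor of any type $A$ theorem used in the paper: those theorems concern affine schemes (the proof of \ref{j*Dcoh=coh} is local on $X$ precisely for this reason), or coherent $\D ^{\dag} _{\X} (\hdag Z _0)_{\Q}$-modules via the ampleness of $Z$ (Noot-Huyghe, \cite{huyghe-comparaison}). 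What you need is that the coherent $\D _{X,\Q}$-module $j _{*} (\E)$ is recovered from its global sections by the tensor construction, i.e.\ a statement of $\D$-affinity type for the projective variety $X$; $\D$-affinity fails for general projective varieties (already $H ^1 (X _K, \O _{X _K}) \neq 0$ for a curve of genus $\geq 1$), so this step would need a real proof, which you do not give. Second, the step you yourself single out as crucial, the bijectivity of $\D ^{\dag} _{\X} (\hdag Z _0)_{\Q} \otimes _{D _{X,\Q}} D _{U,\Q} \to \D ^{\dag} _{\X} (\hdag Z _0)_{\Q}$, is left unproven, and the heuristic you offer (a local equation $t$ of $Z$ becomes invertible in $\O _{\X} (\hdag Z _0)_{\Q}$) does not meet the real difficulty: inverting $t$ is a local, commutative phenomenon, whereas your tensor product is taken over the global ring $D _{X,\Q}$ of a projective variety, so controlling its relations is again a problem of $\D$-affinity type, not of localization. (Over an affine open $V \subseteq X$ on which $Z$ has equation $t$, the analogous claim can be proved, e.g.\ by writing the ring of differential operators on $U \cap V$ as the increasing union of the free cyclic left $D _{V,\Q}$-submodules $D _{V,\Q}\, t ^{-n}$ and using that $t$ is a unit of $\O _{\X} (\hdag Z _0)_{\Q}$; but then the entire argument must be carried out locally, which is a different proof.)

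By contrast, the paper never takes global sections: it checks, as in \ref{ThA-Edag}, that $j _{0} ^{*} (j ^\dag _* (\E)) \riso \E ^\dag$, deduces by adjunction a canonical morphism $j ^\dag _* (\E) \to j _{0*} (\E ^\dag) \to \sp _{+} (\E ^\dag)$, extends scalars to $\D ^{\dag} _{\X} (\hdag Z _0)_{\Q}$ to obtain the map \eqref{jdagE-Edag}, and then concludes that this map is an isomorphism because it is one outside $Z$: its kernel and cokernel are coherent $\D ^{\dag} _{\X} (\hdag Z _0)_{\Q}$-modules vanishing on $\X \setminus Z _0$, hence zero (Berthelot, \cite{Be1}). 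This last principle is also the cheapest way to rescue your computation: once one has \emph{some} canonical morphism between the two sides of \eqref{jdagE-Edag} and the coherence of both, it suffices to compare them on $\X \setminus Z _0$, where $Z$, and with it all of your global-ring difficulties, disappears; conversely, no argument by global generation over $D _{X,\Q}$ can be expected to work on a general projective $X$.
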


\begin{proof}
De manière analogue à \ref{ThA-Edag}, on vérifie que le morphisme canonique 
$\E ^\dag \to \D _{U^\dag,\Q} \otimes _{\epsilon ^{-1}\D _{U,\Q}} \epsilon ^{-1} (\E)$
est un isomorphisme. 
Il en résulte $j _{0} ^{*} (j ^\dag _* (\E)) \riso 
\D _{U^\dag,\Q} \otimes _{\epsilon ^{-1}\D _{U,\Q}} \epsilon ^{-1} (\E)
\liso \E ^\dag$.
Par adjonction, on obtient alors le premier morphisme
$j ^\dag _* (\E) \to j _{0*} (\E ^\dag)\to \sp _{+} (\E ^\dag)$.
On en déduit par extension le morphisme \ref{jdagE-Edag} voulu.
Enfin, ce morphisme est un isomorphisme car il l'est en dehors de $Z$.
\end{proof}

\section{Conditions $(DNL\text{-}NL)$ et $(NL\text{-}NL)$}

Afin d'utiliser les définitions et résultats de Christol et Mebkhout, nous supposons dans cette section que 
$X _{0}$ est de dimension pure égale à $1$. 
Pour simplifier, on suppose en outre que tous les points fermés de $Z _{k}$ soient $k$-rationnel.
Soit $\E^\dag\in \mathrm{Isoc} ^\dag (\O _{U ^{\dag},\Q})$.
Pour tout point fermé $x$ de $Z _{k}$, nous notons $\E ^{\dag}_x$ le $\RR _{K}$-module associé 
à $\E ^\dag$ en $x$ ($\E ^{\dag}_x$ est une extension du module des sections sur le tube $]x[ _\X$ de $x$ dans $\X$ de l'isocristal surconvergent $U _{0}$ associé à $\E ^\dag$, 
en effet le tube $]x[ _\X$ est isomorphe à la boule unité ouverte). 
Considérons les propriétés suivantes qui apparaissent dans \cite[5]{christol-MebkhoutIV} (nous y renvoyons le lecteur pour plus de détails, ainsi qu'à \cite{christol-MebkhoutII}):
\begin{enumerate}
\item \label{hypo-E-isoc-i} pour tout point fermé $x$ de $X _{k}$, $\E ^{\dag}_x$ 
a la propriété $(DNL)$ ;
\item \label{hypo-E-isoc-ibis} pour tout point fermé $x$ de $X _{k}$, $\E ^{\dag}_x$ 
a la propriété $(NL)$ ;
\item \label{hypo-E-isoc-ii} pour tout point fermé $x$ de $X _{k}$, 
$\mathrm{End} _{\RR _{K}} ((\E ^{\dag} _x) _{>0}) $ a la propriété $(NL)$.
\end{enumerate}

\begin{defi}
\label{defi-NL-NL-isoc}
\begin{itemize}
\item On dira que $\E ^{\dag}$ vérifie la propriété $(DNL\text{-}NL)$ (resp. $(NL\text{-}NL)$) si 
les conditions \ref{hypo-E-isoc-i} et \ref{hypo-E-isoc-ii} (resp. \ref{hypo-E-isoc-ibis} et \ref{hypo-E-isoc-ii}) sont satisfaites.
Rappelons que la condition \ref{hypo-E-isoc-ibis} est plus forte que la \ref{hypo-E-isoc-i}.
La propriété $(NL\text{-}NL)$ est donc plus forte que $(DNL\text{-}NL)$. 

\item On dira qu'un objet de $\mathrm{Isoc} ^\dag (\X, Z _0 /K)$ vérifie la propriété $(DNL\text{-}NL)$ (resp. $(NL\text{-}NL)$)
si l'objet (à isomorphisme près) de $\mathrm{Isoc} ^\dag (\O _{U ^{\dag},\Q})$ 
correspondant via l'équivalence $\sp _{+}$ de \ref{sp+-isom-isoc-prop} 
vérifie la propriété $(DNL\text{-}NL)$ (resp. $(NL\text{-}NL)$).

\end{itemize}

\end{defi}

\begin{vide}
[Théorèmes d'algébrisation et de finitude de Christol-Mebkhout]
\label{hypo-E-isoc}
On dispose des théorèmes de Christol et Mebkhout (voir \cite[5]{christol-MebkhoutIV}):
\begin{enumerate}
\item Si $\E ^{\dag}$ vérifie la propriété $(DNL\text{-}NL)$ alors, 
quitte à faire une extension finie du corps de base $K$, 
$\E ^\dag$ est  
dans l'image essentielle du foncteur $\dag$ de \ref{foncteur-dag}. 

\item 
\label{espace-dimfinie}
Si $\E ^{\dag}$ vérifie la propriété $(NL\text{-}NL)$ alors les $K$-espaces de cohomologie de de Rham $p$-adique 
\begin{equation}
\mathrm{Hom}_{\D _{U ^\dag\Q} } (\O _{U ^\dag ,\Q}, \E ^\dag), 
\mathrm{Ext} ^1 _{\D _{U ^\dag, \Q}} (\O _{U ^\dag ,\Q}, \E ^\dag)
\end{equation}
sont de dimension finie.

\end{enumerate}

\end{vide}

Traduisons à présent le théorème de finitude de Christol et Mebkhout dans le langage des
$\D$-modules arithmétiques de Berthelot: 

\begin{prop}
\label{Traduc-NLNL}
Si $\E ^{\dag}$ vérifie la propriété $(NL\text{-}NL)$,
les espaces de cohomologie de
$f _{+} (\sp _{+} (\E ^\dag))$
sont de dimension finie. 
\end{prop}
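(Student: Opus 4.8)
Le plan est de calculer $f _+ (\sp _+ (\E ^\dag))$ au moyen du complexe de de Rham, puis de ramener ses espaces de cohomologie aux espaces de cohomologie de de Rham $p$-adique du point \ref{espace-dimfinie} de \ref{hypo-E-isoc}, dont la finitude r�sulte du th�or�me de Christol-Mebkhout. Comme $X _0$ est une courbe, le morphisme structural $f$ est de dimension relative $1$. Par d�finition de l'image directe des $\D$-modules arithm�tiques et puisque $\sp _+ (\E ^\dag)$ est $\O _{\X} (\hdag Z _0) _{\Q}$-coh�rent, $f _+ (\sp _+ (\E ^\dag))$ se calcule via $\R f _*$ du complexe de de Rham � deux termes (� un d�calage pr�s)
\begin{equation}
\sp _+ (\E ^\dag)
\longrightarrow
\Omega ^1 _{\X} (\hdag Z _0) _{\Q} \otimes _{\O _{\X} (\hdag Z _0) _{\Q}} \sp _+ (\E ^\dag),
\end{equation}
la diff�rentielle �tant induite par la connexion.

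Je r�duirais ensuite le calcul aux sections globales. Comme $Z$ est un diviseur ample, les th�or�mes de type $A$ et $B$ de Noot-Huyghe (voir \cite{huyghe-comparaison}) s'appliquent aux $\D ^{\dag} _{\X} (\hdag Z _0) _{\Q}$-modules coh�rents : les deux termes du complexe pr�c�dent sont acycliques pour $f _*$, de sorte que $\R f _*$ se calcule via les sections globales. Les espaces de cohomologie de $f _+ (\sp _+ (\E ^\dag))$ s'identifient donc aux groupes de cohomologie du complexe de de Rham des sections globales. Via l'isomorphisme \ref{sp+-isom-isoc} de \ref{sp+-isom-isoc-prop} (et son analogue apr�s tensorisation par $\Omega ^1$), ce complexe s'identifie au complexe de de Rham de $E ^\dag = \Gamma (U ^\dag, \E ^\dag)$. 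Comme $U _0$ est une courbe, le noyau de sa diff�rentielle est $\mathrm{Hom} _{\D _{U ^\dag, \Q}} (\O _{U ^\dag, \Q}, \E ^\dag)$ et son conoyau est $\mathrm{Ext} ^1 _{\D _{U ^\dag, \Q}} (\O _{U ^\dag, \Q}, \E ^\dag)$. La propri�t� $(NL\text{-}NL)$ et le point \ref{espace-dimfinie} de \ref{hypo-E-isoc} assurent que ces deux $K$-espaces sont de dimension finie ; tous les espaces de cohomologie de $f _+ (\sp _+ (\E ^\dag))$ le sont donc aussi.

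Le point d�licat sera l'identification pr�cise du complexe calculant $f _+ (\sp _+ (\E ^\dag))$ avec le complexe de de Rham des sections globales de $\E ^\dag$ sur $U ^\dag$ : il faudra v�rifier soigneusement que la prise de sections globales du complexe de de Rham � singularit�s surconvergentes le long de $Z _0$ sur $\X$ redonne exactement le complexe de de Rham de $E ^\dag$, les diff�rentielles (les connexions) se correspondant bien via \ref{sp+-isom-isoc}, ainsi que la compatibilit� des d�calages et conventions de degr� entre l'image directe $f _+$ et ce complexe � deux termes.
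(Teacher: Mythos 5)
Your proposal is correct and follows essentially the same route as the paper's proof: both identify $f_{+}(\sp_{+}(\E^{\dag}))$ (up to the shift $[-d_X]$) with the $p$-adic de Rham cohomology of $E^{\dag}=\Gamma(U^{\dag},\E^{\dag})$ by combining the de Rham/Spencer description of $f_{+}$, the type $A$/$B$ theorems to reduce to global sections, and the isomorphism \ref{sp+-isom-isoc}, before invoking the Christol--Mebkhout finiteness theorem \ref{hypo-E-isoc}.\ref{espace-dimfinie}. The ``point d\'elicat'' you flag at the end is precisely what the paper settles through its chain of $\R\mathrm{Hom}$ isomorphisms (steps 1, 1', 2), using in particular the flatness of $\D_{\X,\Q}\to\D^{\dag}_{\X,\Q}$, so your outline matches the paper's argument in both structure and ingredients.
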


\begin{proof}
1) On a :
$\R \mathrm{Hom}_{\D _{U ^\dag\Q} } (\O _{U ^\dag ,\Q}, \E ^\dag)
\riso
\R \Gamma (U ^\dag, -) \circ  \R \mathcal{H} om_{\D _{U ^\dag\Q} } (\O _{U ^\dag ,\Q}, \E ^\dag)$.
D'après le théorème de type $B$, les termes du complexe
$ \R \mathcal{H} om_{\D _{U ^\dag\Q} } (\O _{U ^\dag ,\Q}, \E ^\dag)
\riso
\Omega ^\bullet _{U ^\dag ,\Q} \otimes _{\O _{U ^\dag ,\Q}}  \E ^\dag $
sont acycliques pour le foncteur
$\Gamma (U ^\dag, -)$.
Il en résulte:
$\R \mathrm{Hom}_{\D _{U ^\dag\Q} } (\O _{U ^\dag ,\Q}, \E ^\dag)
\riso \R \mathrm{Hom}_{D _{U ^\dag\Q} } (A ^\dag _{\Q}, E ^\dag)$.

\noindent 1') De la même façon, grâce aussi à \ref{sp+-isom-isoc}, on obtient le premier isomorphisme: 
$$\R \mathrm{Hom}_{D _{U ^\dag\Q} } (A ^\dag _{\Q}, E ^\dag)
\riso 
\R \mathrm{Hom}_{\D  _{\X} (\hdag Z _0)_{\Q}} (\O  _{\X} (\hdag Z _0)_{\Q}, \sp _{+}(\E ^\dag))
\riso 
\R \mathrm{Hom}_{\D  _{\X,\Q}} (\O  _{\X,\Q}, \sp _{+}(\E ^\dag)).$$
2) Comme l'extension $\D  _{\X,\Q} \to \D ^\dag  _{\X,\Q}$ est plate, 
comme le morphisme canonique
$\D ^\dag  _{\X,\Q}
\otimes _{\D  _{\X,\Q}} \O  _{\X,\Q}
\to
\O  _{\X,\Q}$ est un isomorphisme,
on obtient alors le premier isomorphisme:
$$\R \mathrm{Hom}_{\D  _{\X,\Q}} (\O  _{\X,\Q}, \sp _{+} (\E ^\dag))
\riso
\R \mathrm{Hom}_{\D ^\dag _{\X,\Q}} (\O  _{\X,\Q}, \sp _{+} (\E ^\dag) )
\riso
f _{+} (\sp _{+} (\E ^\dag)) [-d _X].$$
3) En composant les isomorphismes de 1), 1') et 2), on obtient: 
$$\R \mathrm{Hom}_{\D _{U ^\dag\Q} } (\O _{U ^\dag ,\Q}, \E ^\dag)
\riso 
f _{+} (\sp _{+} (\E ^\dag)) [-d _X].$$
On en déduit la proposition grâce à \ref{hypo-E-isoc}.\ref{espace-dimfinie}.
\end{proof}

\section{Condition $(NL\text{-}NL)$ et stabilité de l'holonomie}
Nous supposons dans ce chapitre que 
$X _{0}$ est de dimension pure égale à $1$.

\begin{prop}
\label{2.3.2-courbe-bis}
Soit $\G \in D ^{\mathrm{b}} _{\mathrm{coh}}(\D ^\dag _{\X,\Q})$ (resp. $\G \in D ^{\mathrm{b}} _{\mathrm{hol}}(\D ^\dag _{\X,\Q})$) 
tel que 
les espaces de cohomologie de 
$f _{+} (\G(\hdag Z _0))$ soient de dimension finie. 
Alors $\G (\hdag Z _0)\in D ^{\mathrm{b}} _{\mathrm{coh}}(\D ^\dag _{\X,\Q})$
(resp. $\G (\hdag Z _0)\in D ^{\mathrm{b}} _{\mathrm{hol}}(\D ^\dag _{\X,\Q})$).
\end{prop}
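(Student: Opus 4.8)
Le plan est d'utiliser le triangle de localisation de Berthelot le long de $Z _0$ pour se ramener \`a une propri\'et\'e des complexes support\'es par le ferm\'e $Z _0$, lequel est ici de dimension nulle puisque $X _0$ est une courbe. On dispose du triangle distingu\'e $\R \underline{\Gamma} ^{\dag} _{Z _0} (\G) \to \G \to \G (\hdag Z _0) \overset{+1}{\to}$. Comme $\G$ appartient \`a $D ^{\mathrm{b}} _{\mathrm{coh}} (\D ^\dag _{\X,\Q})$ (resp. \`a $D ^{\mathrm{b}} _{\mathrm{hol}} (\D ^\dag _{\X,\Q})$) et comme ces sous-cat\'egories sont stables par formation du c\^one, il suffit d'\'etablir que $\NN := \R \underline{\Gamma} ^{\dag} _{Z _0} (\G)$ appartient \`a la m\^eme cat\'egorie ; on en d\'eduira alors l'assertion sur $\G (\hdag Z _0)$.

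L'objet $\NN$ a sa cohomologie quasi-coh\'erente support\'ee par $Z _0$, qui est un ensemble fini de points ferm\'es $k$-rationnels. D'apr\`es le th\'eor\`eme de Berthelot-Kashiwara, pour chacun de ces points $x$, la cat\'egorie des $\D ^\dag _{\X,\Q}$-modules coh\'erents support\'es par $x$ est \'equivalente, via l'image directe par l'immersion ferm\'ee $x \hookrightarrow \X$ (c'est-\`a-dire via $f _{+}$ localement), \`a celle des $K$-espaces vectoriels de dimension finie ; en particulier un tel module est automatiquement holonome. Par d\'evissage sur les espaces de cohomologie, on en tire que $\NN \in D ^{\mathrm{b}} _{\mathrm{coh}} (\D ^\dag _{\X,\Q})$ \'equivaut \`a $\NN \in D ^{\mathrm{b}} _{\mathrm{hol}} (\D ^\dag _{\X,\Q})$, et que ces appartenances \'equivalent \`a la finitude des espaces de cohomologie de $f _{+} (\NN)$.

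Il reste donc \`a v\'erifier cette finitude. On applique $f _{+}$ au triangle de localisation, ce qui fournit le triangle distingu\'e $f _{+} (\NN) \to f _{+} (\G) \to f _{+} (\G (\hdag Z _0)) \overset{+1}{\to}$. Par hypoth\`ese, les espaces de cohomologie de $f _{+} (\G (\hdag Z _0))$ sont de dimension finie. D'autre part, comme $X$ est projectif, le morphisme structural $f$ est propre, de sorte que les espaces de cohomologie de $f _{+} (\G)$ sont de dimension finie (finitude de l'image directe propre d'un complexe coh\'erent, la coh\'erence sur le point $\Spf \V$ \'equivalant \`a la dimension finie sur $K$). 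La suite exacte longue de cohomologie associ\'ee montre alors que les espaces de cohomologie de $f _{+} (\NN)$ sont de dimension finie. D'apr\`es le paragraphe pr\'ec\'edent, $\NN$ est holonome, donc coh\'erent, ce qui permet de conclure dans les deux cas.

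La principale difficult\'e r\'eside dans l'\'etape de Berthelot-Kashiwara : il faut d'abord s'assurer que $\NN$ a bien une cohomologie quasi-coh\'erente support\'ee par $Z _0$ (ce qui r\'esulte du triangle de localisation, $\G$ \'etant coh\'erent et $\G (\hdag Z _0)$ \'etant $\D ^\dag _{\X} (\hdag Z _0)_{\Q}$-coh\'erent), afin de traduire la finitude de $f _{+} (\NN)$ en coh\'erence. Le point essentiel est ici que la dimension $1$ de $X _0$ rend $Z _0$ de dimension nulle, si bien que tout module coh\'erent support\'e par $Z _0$ est holonome : c'est ce qui rend les deux cas (\og coh\'erent\fg{} et \og holonome\fg) simultan\'ement accessibles. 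Le second ingr\'edient \`a justifier soigneusement sans structure de Frobenius est la finitude de $f _{+} (\G)$ via la propret\'e de $f$.
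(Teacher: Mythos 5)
Your proof is correct and takes essentially the same route as the paper's: both apply $f_+$ to the localization triangle of $\G$ along $Z _0$, use the two-out-of-three property of distinguished triangles together with the finiteness of $f_+(\G)$ (coming from the properness of $f$), and invoke the Berthelot--Kashiwara theorem over the zero-dimensional support $Z _0$ to convert finiteness of the cohomology of $f _{+}\R \underline{\Gamma} ^\dag _{Z _0}(\G)$ into coherence (resp.\ holonomy) of $\R \underline{\Gamma} ^\dag _{Z _0}(\G)$, before concluding via the localization triangle. Your write-up simply makes explicit the steps that the paper delegates to its citation of \cite[2.3.2]{caro_courbe-nouveau}.
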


\begin{proof}
La preuve est identique à celle de \cite[2.3.2]{caro_courbe-nouveau} : 
si deux des complexes d'un triangle distingué sont cohérents (resp. holonomes) alors le troisième l'est aussi. 
En appliquant le foncteur $f _{+}$ au triangle de localisation de $\G$ relative à $Z _0$, 
il en résulte la cohérence (resp. holonomie) de 
$f _{+} \R \underline{\Gamma} ^\dag _{Z _0}  (\G )$. 
Or, via le théorème de Berthelot-Kashiwara, cette cohérence (resp. holonomie) est équivalente à 
celle de $\R \underline{\Gamma} ^\dag _{Z _0}  (\G )$.
Grâce au triangle de localisation de $\G$ relative à $Z _0$, 
il en résulte alors celle de $\G$.
\end{proof}

\begin{prop}
\label{desc-hol-chgtbase}
Soit $\V \to \V'$ une extension finie d'anneau de valuation discrètes complets d'inégales caractéristiques $(0,p)$.
On pose $\X ' \times _{\mathrm{Spf} (\V)} \mathrm{Spf} (\V')$ le $\V'$-schéma formel lisse déduit par changement de base
et $\alpha \,:\, \X' \to \X$ la projection canonique. 
Soit $\G$ un $\D ^\dag _{\X} (\hdag Z _0)_{\Q}$-module cohérent. Alors $\G$ est $\D ^\dag _{\X,\Q}$-cohérent (resp. holonome) si et seulement
si $\alpha ^{*} (\G)$ est $\D ^\dag _{\X',\Q}$-cohérent (resp. holonome). 
\end{prop}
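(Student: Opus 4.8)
La démarche repose sur le fait que le morphisme $\alpha \,:\, \X' \to \X$, déduit par changement de base de l'extension finie $\V \to \V'$, est fini, plat et surjectif, donc fidèlement plat ; le foncteur $\alpha ^{*}$ est par conséquent exact et fidèle. La première étape consiste à établir les compatibilités de changement de base pour les faisceaux d'opérateurs différentiels, à savoir les isomorphismes canoniques $\alpha ^{*} \D ^{\dag} _{\X,\Q} \riso \D ^{\dag} _{\X',\Q}$ et $\alpha ^{*} \D ^{\dag} _{\X} (\hdag Z _0)_{\Q} \riso \D ^{\dag} _{\X'} (\hdag Z' _0)_{\Q}$, où $Z' _0 := \alpha ^{-1} (Z _0)$. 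Ces isomorphismes résultent de la construction de Berthelot : le faisceau tangent relatif vérifie $\T _{\X'/\V'} \riso \alpha ^{*} \T _{\X/\V}$, d'où la compatibilité des opérateurs de niveau fini, puis de leurs complétés faibles $p$-adiques et de la tensorisation par $\Q$. On notera enfin que la fibre spéciale $X' _0 = X _0 \times _k k'$ (avec $k'$ le corps résiduel de $\V'$) a même dimension que $X _0$, de sorte que $d _{X' _0} = d _{X _0}$.

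Traitons d'abord la cohérence. Le sens direct est immédiat : si $\G$ est $\D ^{\dag} _{\X,\Q}$-cohérent, il admet localement une présentation finie, laquelle se transporte par le foncteur exact à droite $\alpha ^{*}$ en une présentation finie de $\alpha ^{*} (\G)$ sur $\alpha ^{*} \D ^{\dag} _{\X,\Q} = \D ^{\dag} _{\X',\Q}$, d'où la $\D ^{\dag} _{\X',\Q}$-cohérence. Pour la réciproque, on invoque la descente fidèlement plate : le morphisme $\alpha ^{-1} \D ^{\dag} _{\X,\Q} \to \D ^{\dag} _{\X',\Q}$ étant fidèlement plat (platitude du changement de base fini et surjectivité de $\alpha$), une présentation finie de $\alpha ^{*} (\G)$ redescend en une présentation finie de $\G$, ce qui fournit la $\D ^{\dag} _{\X,\Q}$-cohérence cherchée.

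Passons à l'holonomie, la cohérence étant désormais acquise dans les deux sens. Par la définition homologique rappelée plus haut, il s'agit de comparer les faisceaux $\mathcal{E}xt ^{l} _{\D ^{\dag} _{\X,\Q}} (\G, \D ^{\dag} _{\X,\Q})$ et $\mathcal{E}xt ^{l} _{\D ^{\dag} _{\X',\Q}} (\alpha ^{*} (\G), \D ^{\dag} _{\X',\Q})$ pour tout $l \neq d _{X _0} = d _{X' _0}$. Le point-clé est l'isomorphisme canonique de changement de base pour la dualité $\alpha ^{*} \R \mathcal{H} om _{\D ^{\dag} _{\X,\Q}} (\G, \D ^{\dag} _{\X,\Q}) \riso \R \mathcal{H} om _{\D ^{\dag} _{\X',\Q}} (\alpha ^{*} (\G), \D ^{\dag} _{\X',\Q})$, que l'on obtient en calculant localement le membre de gauche à l'aide d'une résolution de $\G$ par des $\D ^{\dag} _{\X,\Q}$-modules libres de type fini (de longueur finie, $\D ^{\dag} _{\X,\Q}$ étant de dimension cohomologique finie) : la platitude de $\alpha$ assure que $\alpha ^{*}$ préserve la résolution et commute au $\R \mathcal{H} om$, tandis que la compatibilité $\alpha ^{*} \D ^{\dag} _{\X,\Q} \riso \D ^{\dag} _{\X',\Q}$ identifie le but. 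La fidélité de $\alpha ^{*}$ entraîne alors l'équivalence des annulations $\mathcal{E}xt ^{l} = 0$ dans les deux sens pour tout $l \neq d _{X _0}$, d'où l'équivalence des holonomies.

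Le principal obstacle sera la commutation de $\alpha ^{*}$ au foncteur de dualité de la troisième étape : le faisceau $\D ^{\dag} _{\X,\Q}$ n'étant pas noethérien au sens usuel, il faudra justifier l'existence locale de résolutions libres de type fini adaptées et contrôler leur comportement vis-à-vis de la complétion faible et de la limite inductive sur le niveau, ainsi que la finitude de la dimension cohomologique. Une fois cette commutation et la compatibilité des faisceaux d'opérateurs acquises, tout le reste relève du seul formalisme de la descente fidèlement plate.
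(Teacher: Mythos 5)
Your direct implication and your treatment of the holonomy statement (flat base change for $\R \mathcal{H} om$, then faithfulness of $\alpha ^{*}$) are correct and agree in substance with steps 0 and III of the paper's proof. The genuine gap lies in the converse direction for coherence, which you dispatch in one sentence: since $\alpha ^{-1} \D ^\dag _{\X,\Q} \to \D ^\dag _{\X',\Q}$ is faithfully flat, ``une pr\'esentation finie de $\alpha ^{*} (\G)$ redescend en une pr\'esentation finie de $\G$''. There is no such descent theorem to invoke here: $\D ^\dag _{\X,\Q}$ is a non-commutative sheaf of rings on a formal scheme, its coherent modules are not quasi-coherent $\O$-modules, and classical faithfully flat descent of finite presentation is a statement about modules over rings, not about sheaves of modules over such sheaves of rings. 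The sheaf-level descent statement you appeal to is, in essence, exactly the proposition being proved.

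The paper's proof shows what the missing work is. It reduces to $\X$ affine, sets $G := \Gamma (\X, \G)$, and uses theorems of type $A$ to translate everything into modules: one gets $\Gamma (\X', \alpha ^{*} (\G)) \riso D ^\dag _{\X',\Q} \otimes _{D ^\dag _{\X,\Q}} G$, and ring-level faithfully flat descent along $D ^\dag _{\X,\Q} \to D ^\dag _{\X',\Q}$ (legitimate, this extension being deduced from $\V \to \V'$) yields that $G$ is a coherent $D ^\dag _{\X,\Q}$-module (steps I.1 and I.2). But this is not yet the conclusion: coherence of the module $G$ does not by itself give coherence of the sheaf $\G$ over $\D ^\dag _{\X,\Q}$; one must still prove that $\G$ really is the sheaf attached to $G$, i.e.\ that for every basic open $\X _a$ the canonical map $D ^\dag _{\X _a,\Q} \otimes _{D ^\dag _{\X,\Q}} G \to \Gamma (\X _a, \G)$ is an isomorphism. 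This is the paper's step II, established by a second descent argument, this time along the faithfully flat extension $D ^\dag _{\X _a,\Q} \to D ^\dag _{\X' _{a'},\Q}$, comparing sections over $\X _a$ and $\X' _{a'}$; only then does the type $A$ theorem for coherent $\D ^\dag _{\X,\Q}$-modules give the coherence of the sheaf $\G$. Your proposal has no counterpart of this step. Note finally that your last paragraph points at the wrong difficulty: commuting $\alpha ^{*}$ with the dual is routine once coherence is acquired (flat, indeed finite locally free, base change for $\mathcal{E}xt$ of coherent modules), whereas the coherence descent that you describe as pure formalism is where essentially all the work of the paper's proof lies.
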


\begin{proof}
Si $\G$ est $\D ^\dag _{\X,\Q}$-cohérent (resp. holonome), il est alors immédiat (voir \cite{Beintro2}) que 
$\alpha ^{*} (\G)$ soit $\D ^\dag _{\X',\Q}$-cohérent (resp. holonome). 
Réciproquement, supposons que 
$\alpha ^{*} (\G)$ soit $\D ^\dag _{\X',\Q}$-cohérent.
Comme l'assertion est locale, on se ramène au cas où $\X$ est affine. 
Posons $G:= \Gamma (\X,\G)$ et $Z ' _0:= \alpha ^{-1} (Z_0)$.

0) Comme dans le cas des $\D$-modules en caractéristique nulle, 
on vérifie que le morphisme canonique 
$\D ^\dag _{\X'} (\hdag Z '_0)_{\Q} \to \alpha ^{*} \D ^\dag _{\X} (\hdag Z _0)_{\Q}$
est en fait un isomorphisme et que le morphisme que l'on en déduit
$\alpha ^{-1} \D ^\dag _{\X} (\hdag Z _0)_{\Q}
\to
\D ^\dag _{\X'} (\hdag Z '_0)_{\Q} $ 
est un morphisme d'anneaux (voir \cite[2.2.2]{Beintro2}). 
On obtient ainsi les extensions 
$D ^\dag _{\X} (\hdag Z _0)_{\Q}
\to
D ^\dag _{\X'} (\hdag Z '_0)_{\Q} $ et 
$D ^\dag _{\X, \Q} \to D ^\dag _{\X', \Q} $.

I.1) 
Comme $\G$ est $\D ^\dag _{\X} (\hdag Z _0)_{\Q}$-cohérent,
par théorème de type $A$:
$\Gamma (\X',\alpha ^{*} (\G))
\riso 
D ^\dag _{\X'} (\hdag Z _0')_{\Q} \otimes _{D ^\dag _{\X} (\hdag Z _0)_{\Q}} G$.
Comme le morphisme canonique $D ^\dag _{\X',\Q} \otimes_{D ^\dag _{\X,\Q}} 
D ^\dag _{\X} (\hdag Z _0)_{\Q}
 \to D ^\dag _{\X'} (\hdag Z _0')_{\Q}$ est 
 un isomorphisme,
 il en résulte 
 $\Gamma (\X',\alpha ^{*} (\G))
\riso 
D ^\dag _{\X',\Q} \otimes _{D ^\dag _{\X,\Q}} G$.

I.2) Or, d'après un théorème de type $A$, 
$\Gamma (\X',\alpha ^{*} (\G))$ est $D ^\dag _{\X',\Q}$-cohérent. 
De plus, le morphisme $D ^\dag _{\X,\Q}\to D ^\dag _{\X',\Q}$ est fidèlement plat car il se déduit par extension 
du morphisme $\V \to \V'$. Cela implique alors que 
$G$ est un $D ^\dag _{\X,\Q}$-module cohérent. 

II) Soit $a$ une section globale de $\O _{\X}$ et notons $a'$ la section globale de $\O _{\X'}$ déduite.
De manière analogue à I.1), comme $\G|\X _{a}$ est $\D ^\dag _{\X _a} (\hdag Z _0 \cap X _{a})_{\Q}$-cohérent,
on vérifie 
$\Gamma (\X' _{a'},\alpha ^{*} (\G))
\riso 
D ^\dag _{\X' _{a'},\Q} \otimes _{D ^\dag _{\X _{a},\Q}} \Gamma (\X _{a},\G)$.
Comme $\alpha ^{*} (\G)$ est $\D ^\dag _{\X',\Q}$-cohérent, 
$\Gamma (\X ' _{a'}, \alpha ^{*} (\G))$ est $D ^\dag _{\X' _{a'},\Q} $-cohérent
et 
$\Gamma (\X ' _{a'}, \alpha ^{*} (\G)) \riso D ^\dag _{\X '_{a'}, \Q}  \otimes _{D ^\dag _{\X ',\Q}}  
\Gamma (\X ', \alpha ^{*} (\G))$.
Via la conclusion de I.1), il en découle le premier isomorphisme:
$\Gamma (\X ' _{a'}, \alpha ^{*} (\G)) 
\riso 
D ^\dag _{\X '_{a'}, \Q}  \otimes _{D ^\dag _{\X ,\Q}}  
G
\riso 
D ^\dag _{\X '_{a'}, \Q}  \otimes _{D ^\dag _{\X _{a},\Q}}  
(D ^\dag _{\X _{a}, \Q}  \otimes _{D ^\dag _{\X ,\Q}}  
G)
$.
Comme l'extension 
$D ^\dag _{\X _{a}, \Q}  \to D ^\dag _{\X '_{a'}, \Q} $ est fidèlement plate, 
il en résulte que le
morphisme canonique 
$D ^\dag _{\X _{a}, \Q}  \otimes _{D ^\dag _{\X ,\Q}}   G
\to 
\Gamma (\X _{a},\G)$
est un isomorphisme.
D'après le théorème de type $A$ pour les $\D ^\dag _{\X,\Q}$-modules cohérents, 
cela implique la $\D ^\dag _{\X,\Q}$-cohérence de $\G$.

III) Si $\alpha ^{*} (\G)$ est un $\D ^\dag _{\X',\Q}$-module holonome alors d'après l'étape précédente, 
$\G$ est un $\D ^\dag _{\X,\Q}$-module cohérent. 
Comme l'extension $\alpha ^{-1} \D ^\dag _{\X,\Q} \to \D ^\dag _{\X',\Q}$ 
est plate, comme $\alpha ^{*} (\G)$ est holonome, 
il en résulte que $\G$ est holonome.
\end{proof}

\begin{defi}
\label{defi-NLNL-Ddag}
Soient $T _0$ un diviseur de $X _0$ et $\G \in D ^{\mathrm{b}} _{\mathrm{coh}}(\D ^\dag _{\X} (\hdag T _0)_{\Q})$.
On dit que $\G$ vérifie la propriété $(NL\text{-}NL)$ s'il existe un diviseur $Z' $ de $X$ tel que
$U ':= X \setminus Z' $ soit affine, $Z' _0:=Z ' \otimes _{\V} k \supset T _0$ et, 
quitte à faire une extension finie de la base, 
pour tout $l \in \Z$, 
$\mathcal{H} ^{l} \G (\hdag Z _0')$ soit associé à un isocristal surconvergent sur $U' _0:=U ' \otimes _{\V} k$
qui vérifie la propriété $(NL\text{-}NL)$.
\end{defi}

\begin{exem}
Avec les notations de \ref{defi-NLNL-Ddag}, en notant $\Y:= \X \setminus T _{0}$, 
les $F$-complexes $\G$ de $F\text{-}D ^{\mathrm{b}} _{\mathrm{coh}}(\D ^\dag _{\X} (\hdag T _0)_{\Q})$ tels que 
$\G |\Y$ est holonome
vérifient la propriété $(NL\text{-}NL)$.
En effet, cela résulte de \cite[5.3.5.(i)]{Beintro2}, \cite[2.2.12]{caro_courbe-nouveau}
et du fait qu'un $F$-isocristal surconvergent sur une courbe lisse vérifie la propriété $(NL\text{-}NL)$.
Le théorème ci-dessous est donc une extension au cas sans structure de Frobenius de la conjecture {\og forte\fg} de Berthelot.
\end{exem}

\begin{theo}
\label{conjD-sansFrob}
Soient $T _0$ un diviseur de $X$ et $\G \in D ^{\mathrm{b}} _{\mathrm{coh}}(\D ^\dag _{\X} (\hdag T _0)_{\Q})$
satisfaisant à la propriété $(NL\text{-}NL)$. Alors 
$\G \in D ^{\mathrm{b}} _{\mathrm{hol}}(\D ^\dag _{\X,\Q})$.
\end{theo}

\begin{proof}
Grâce à \ref{desc-hol-chgtbase}, 
quitte à faire une extension finie de la base, 
on supposer qu'il existe un diviseur $Z' $ de $X$ tel que
$U ':= X \setminus Z' $ soit affine, $Z' _0:=Z ' \otimes _{\V} k \supset T _0$ et,
pour tout $l \in \Z$, 
$\mathcal{H} ^{l} \G (\hdag Z _0')$ soit un isocristal surconvergent sur $U ' \otimes _{\V} k$
qui vérifie la propriété $(NL\text{-}NL)$.
Il découle alors de \ref{Traduc-NLNL} que 
les espaces de cohomologie de
$f _{+} (\mathcal{H} ^{l} \G (\hdag Z _0'))$ sont de dimension finie. 
D'après \ref{j*dag-holo} et \ref{jdagE-Edag}, 
il existe un $\D ^\dag _{\X,\Q}$-module holonome $\G _{l}$ tel que
$\mathcal{H} ^{l} \G (\hdag Z _0') 
\riso 
\G _{l} (\hdag Z _0') $.
Il résulte alors de \ref{2.3.2-courbe-bis} que 
$\mathcal{H} ^{l} \G (\hdag Z _0') $ est un   $\D ^\dag _{\X,\Q}$-module holonome. 
On a ainsi établi que 
$\G (\hdag Z _0')  \in D ^{\mathrm{b}} _{\mathrm{hol}}(\D ^\dag _{\X,\Q})$.
On peut supposer $Z'_0$ et $T _0$ réduits. 
Soit $Z''_0$ tel que $Z'_0$ soit la réunion disjointe de $T _0$ et $Z''_0$. 
Comme $\G (\hdag Z _0') \riso \G (\hdag Z _0'')$ (e.g. cela découle de \cite[2.2.14]{caro_surcoherent} et du fait que 
$\G \in D ^{\mathrm{b}} _{\mathrm{coh}}(\D ^\dag _{\X} (\hdag T _0)_{\Q})$), 
on déduit du triangle de localisation de $\G$ en $Z''_0$ que 
$\R \underline{\Gamma} ^\dag _{Z''_0} (\G)\in D ^{\mathrm{b}} _{\mathrm{coh}}(\D ^\dag _{\X} (\hdag T _0)_{\Q})$.
Soient $u\,:\, \ZZ '' \hookrightarrow \X$ un relèvement de $Z'' _0\to \X$. 
Comme $Z''_0 \cap T _0 $ est vide, comme 
$u _{+} u ^{!} (\G)\riso \R \underline{\Gamma} ^\dag _{Z''_0} (\G)$ est à support dans $Z''_0$, 
il résulte alors du théorème de Berthelot-Kashiwara que $u ^{!} (\G)$ est un isocristal convergent sur $Z''_0$ (comme $Z '' _0$ est de dimension nulle, 
être un $\D^\dag _\Q$-module cohérent sur $Z '' _0$ ou un isocristal convergent sur $Z '' _0$ sont deux choses équivalentes). 
En particulier, le module $u ^{!} (\G)$ est holonome (les isocristaux convergents sont toujours holonomes car leur dual comme $\D^\dag _\Q$-module est le même que comme isocristal convergent, ce dernier n'ayant qu'un terme: voir \cite{caro_comparaison}). 
Comme $u$ est une immersion fermée,
le foncteur $u _{+}$ préserve l'holonomie (en effet, cela résulte du théorème de dualité relative et de l'exactitude de $u _{+}$).
On en déduit que 
$\R \underline{\Gamma} ^\dag _{Z''_0} (\G)\in D ^{\mathrm{b}} _{\mathrm{hol}}(\D ^\dag _{\X,\Q})$.
Via le triangle de localisation de $\G$ en $Z''_0$, il en dérive que $\G \in D ^{\mathrm{b}} _{\mathrm{hol}}(\D ^\dag _{\X,\Q})$.

\end{proof}

\begin{coro}
\label{conjD-sansFrob}
Soient $T _0$ un diviseur de $X$ et $\G$ un $\D ^\dag _{\X} (\hdag T _0)_{\Q}$-module cohérent, 
$\O _{\X} (\hdag T _0)_{\Q}$-cohérent qui 
vérifie la propriété $(NL\text{-}NL)$. Alors 
$\G $ est un $\D ^\dag _{\X,\Q}$-module holonome.
\end{coro}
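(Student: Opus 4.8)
The plan is to read off the corollary as the special case of the preceding theorem (Théorème \ref{conjD-sansFrob}) in which the complex is concentrated in a single degree. First I would view the coherent, $\O_{\X}(\hdag T_0)_{\Q}$-coherent module $\G$ as a complex placed in degree $0$; it is then an object of $D^{\mathrm{b}}_{\mathrm{coh}}(\D^\dag_{\X}(\hdag T_0)_{\Q})$. The property $(NL\text{-}NL)$ appearing in the hypothesis is, for such a module, nothing but the property of Définition \ref{defi-NLNL-Ddag} applied to this degree-$0$ complex: there is a divisor $Z'$ of $X$ with $U' = X \setminus Z'$ affine and $Z'_0 \supset T_0$ such that, after a finite extension of the base, each $\mathcal{H}^l(\G(\hdag Z'_0))$ is an overconvergent isocrystal satisfying $(NL\text{-}NL)$. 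Since $\G$ is $\O_{\X}(\hdag T_0)_{\Q}$-coherent and $Z'_0 \supset T_0$, its localisation $\G(\hdag Z'_0)$ stays concentrated in degree $0$ and $\O_{\X}(\hdag Z'_0)_{\Q}$-coherent, so here the hypothesis bears only on the single isocrystal $\G(\hdag Z'_0)$.

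Next I would invoke the preceding theorem, which gives $\G \in D^{\mathrm{b}}_{\mathrm{hol}}(\D^\dag_{\X,\Q})$. The final step is to unwind what this means for a complex concentrated in degree $0$: since $\mathcal{H}^0(\G) = \G$ and $\mathcal{H}^l(\G) = 0$ for $l \neq 0$, membership of $\G$ in $D^{\mathrm{b}}_{\mathrm{hol}}(\D^\dag_{\X,\Q})$ is exactly the assertion that its unique nonzero cohomology module $\G$ is $\D^\dag_{\X,\Q}$-coherent with $\mathcal{E}xt^l_{\D^\dag_{\X,\Q}}(\G, \D^\dag_{\X,\Q}) = 0$ for every $l \neq d_{X_0} = 1$, i.e. that $\G$ is a holonomic $\D^\dag_{\X,\Q}$-module.

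I expect no genuine obstacle here, all the substance having already been carried by the theorem. The only point that needs a moment's care is the coherence of the two readings of $(NL\text{-}NL)$, namely the one implicit in the statement of the corollary, for an $\O$-coherent module, and the one of Définition \ref{defi-NLNL-Ddag}, for a complex; this reduces, as noted above, to the remark that localising an $\O$-coherent module along a larger divisor $Z'_0 \supset T_0$ keeps it $\O$-coherent and concentrated in degree $0$.
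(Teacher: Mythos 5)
Your proof is correct and matches the paper's (implicit) argument: the paper states this corollary without a separate proof, precisely because it is the specialization of Th\'eor\`eme \ref{conjD-sansFrob} to a complex concentrated in degree $0$, exactly as you spell out. Your extra care about the two readings of $(NL\text{-}NL)$ and about unwinding $D^{\mathrm{b}}_{\mathrm{hol}}$ for a single-degree complex is sound and fills in what the paper leaves tacit.
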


\bibliographystyle{smfalpha}
\providecommand{\bysame}{\leavevmode ---\ }
\providecommand{\og}{``}
\providecommand{\fg}{''}
\providecommand{\smfandname}{et}
\providecommand{\smfedsname}{\'eds.}
\providecommand{\smfedname}{\'ed.}
\providecommand{\smfmastersthesisname}{M\'emoire}
\providecommand{\smfphdthesisname}{Th\`ese}

\bigskip
\noindent Daniel Caro\\
Laboratoire de Mathématiques Nicolas Oresme\\
Université de Caen
Campus 2\\
14032 Caen Cedex\\
France.\\
email: daniel.caro@math.unicaen.fr

\end{document}